\begin{document}

\title{Order-to-topology continuous operators
}


\author{Kazem Haghnejad Azar}


\institute{K. Haghnejad Azar \at
           Department of Mathematics\\
           University of Mohaghegh Ardabili\\
           Ardabil, Iran\\
           \email{haghnejad@uma.ac.ir}}

\date{Received: date / Accepted: date}

\maketitle

\begin{abstract}
An operator $T$ from vector lattice $E$ into vector topology $(F,\tau)$ is said to be order-to-topology continuous whenever $x_\alpha\xrightarrow{o}0$ implies $Tx_\alpha\xrightarrow{\tau}0$ for each $(x_\alpha)_\alpha\subset E$. The collection of all order-to-topology continuous operators will be denoted by $L_{o\tau}(E,F)$. In  this paper, we will study some properties of this new classification of operators. We will investigate the relationships between order-to-topology continuous operators and others classes of operators such as order continuous, order weakly compact and $b$-weakly compact operators.
\keywords{vector lattice \and order-to-topology continuous \and b-weakly compact operator}
\subclass{46B42 \and 47B60}
\end{abstract}

\section{Introduction}
In locally solid vector lattice, topologies for which order convergence implies topological convergence are very useful. They are known as order continuous topologies. A linear topology $\tau$ on a vector lattice is said to be order continuous whenever $x_\alpha\xrightarrow{o}0$  implies $x_\alpha\xrightarrow{\tau}0$. 
In normed vector lattice, it is also favourite to us, when order convergence is norm convergent. A normed lattice $E$ has order continuous norm if $\| x_\alpha\|\rightarrow 0$ for every decreasing net $(x_\alpha)_\alpha$ with $\inf_\alpha x_\alpha=0$. Let $E$ be a vector lattice and $(F,\tau)$ be a vector topology. In this manuscript, we will investigate on operators $T:E\rightarrow F$ which carrier every order convergence net into topological convergence. To state our results, we need to fix some notation and recall some definitions.
A net $(x_{\alpha})_{\alpha \in A}$ in a vector lattice $ E $ is said to be strongly order convergent to $x\in E$ if there is a net $(z_{\beta})_{\beta \in B} $ in $ E $ such that $ z_{\beta} \downarrow 0 $ and for every $ \beta \in B$, there exists $\alpha_{0} \in A$ such that $ | x_{\alpha} - x |\leq z_{\beta}$ whenever $ \alpha \geq \alpha_{0}$. For short, we will denote this convergence by $ x_{\alpha} \xrightarrow{so} x $ and write that $ x_{\alpha} $ is $so$-convergent to $x$. Obviusely every order convergence net in a vector lattice is strongly order convergent, but converse not holds and for Dedekind complete vector lattice both definitions are the same, for detile see \cite{1b}. A net $ (x_{\alpha})_{\alpha}$ in vector lattice $ E $ is unbounded order convergent to $ x \in E $ if $ | x_{\alpha} - x | \wedge u \xrightarrow{so} 0$ for all $ u \in E^{+} $. We denote this convergence by $ x_{\alpha} \xrightarrow{uo}x $ and write that $ x_{\alpha} $ $uo$-convergent to $ x $. It is clear that for order bounded nets, $uo$-convergence is equivalent to $so$-convergence. In \cite{7}, Wickstead characterized the spaces in which $w$-convergence of nets implies $uo$-convergence and vice versa and in \cite{5g1}, characterized the spaces $E$ such that in its dual space $ E^{\prime} $, $uo$-convergence implies $w^{*}$-convergence and vice versa. A Banach lattice $E$ is said to be an $AM$-space if for each $x,y\in E$ such that $|x|\wedge |y|=0$, we have $\|x+y\|= max \{\|x\|, \|y\|\}$.  A Banach lattice $E$ is said to be $KB$-space whenever each increasing norm bounded sequence of $E^+$ is norm convergent. An operator $T: E\rightarrow F$ between two vector lattices  is positive if $T(x)\geq 0$ in $F$ whenever $x\geq 0$ in $E$. Note that each positive linear mapping on a Banach lattice is continuous. 
In this manuscript $L_b(E,F)$ is the all of bounded operators and 
the collection of all order continuous operators of $L_b(E,F)$ will be denoted by $L_n(E,F)$; the subscript $n$ is justified by the fact that the order continuous operators are also known as normal operators. That is, 
$$L_n(E,F) :=\{T \in L_b(E,F): T~ \text{is~ order~ continuous}\}.$$
Similarly, $L_c(E,F)$ will denote the collection of all order bounded operators from $E$ to $F$ that are $\sigma-$order continuous. 
An operator $T$ from a Banach space $X$ into a Banach space $Y$ is compact (resp. weakly compact) if $\overline{{T(B _ X)}}$  is compact (resp. weakly compact) where $B _ X$ is the closed unit ball of $X$.   A continuous operator from Banach lattice $E$ into Banach space $X$ is called $M$-weakly compact if $\lim \Vert Tx_n\Vert=0$ holds for every norm bounded disjoint sequence $(x_n)_n$ of $E$. A subset $A$ of a vector lattice $E$ is called $b$-order bounded in $E$ if it is order bounded in $E^{\sim\sim}$.  An operator $T:E\rightarrow X$,  mapping each $b$-order bounded subset of $E$ into a relatively weakly compact subset of $X$ is called a $b$-weakly compact operator, see \cite{3}.
An operator $T:E\rightarrow X$ from vector lattice into normed space is called interval-bounded if the image of every order interval is norm bounded. For every interval-bounded linear operator $T:E\rightarrow X$, set
\begin{equation*}
q_T(x)=\sup\{\Vert Ty\Vert:~\vert y\vert\leq \vert x\vert\},
\end{equation*}
be the absolute monotone seminorm induced by $T$ where $x\in E$.
 For terminology concerning Banach lattice theory and positive operators, we refer the reader to the excellent book of \cite{1}.\\

\section{Main results}

Let $E$ be a vector lattice and $F$ be a  vector topology  with topology $\tau$.
An operator $T$ from $E$ into  $F$ is said to be order-to-topology continuous whenever $x_\alpha\xrightarrow{o}0$ implies $Tx_\alpha\xrightarrow{\tau}0$ for each $(x_\alpha)_\alpha\subset E$. For each sequence $(x_n)\subset E$, if $x_n\xrightarrow{o}0$ implies $Tx_n\xrightarrow{\tau}0$, then  $T$ is called  $\sigma$-order-to-topology continuous operator. 
The collection of all order-to-topology continuous operators will be denoted by $L_{o\tau}(E,F)$;
the subscript $o\tau$ is justified by the fact that the order-to-topology continuous operators;
that is, 
\begin{equation*}
L_{o\tau}(E,F)=\{T\in L(E,F):~T~\text{is order-to-topology continuous }\}.
\end{equation*}
Similarly, $L^\sigma_{o\tau}(E,F)$ will be denote the collection of all $\sigma$-order-to-topology continuous operators, that is, 
\begin{equation*}
L^\sigma_{o\tau}(E,F)=\{T\in L(E,F):~T~\text{is} ~\sigma-\text{order-to-topology continuous }\}.
\end{equation*}

For a normed space $F$, we write $L_{on}(E,F)$ and $L_{ow}(E,F)$ for collection of order-to-norm topology continuous operators and order-to-weak topology continuous operators, respectively.
$L^\sigma_{on}(E,F)$ and $L^\sigma_{ow}(E,F)$ have similar definitions. Clearly $L^\sigma_{on}(E,F)$ is a subspace of $L^\sigma_{ow}(E,F)$ and if $F$ has the Schur property, then $L^\sigma_{on}(E,F)=L^\sigma_{ow}(E,F)$. Let $T$ be an order-to-norm topology continuous operators from a vector lattice $E$ into a normed vector lattice $F$ and $0\leq S\leq T$ where $S\in L(E,F)$. Then observe that $S$  is an order-to-norm topology continuous operator. It is clear that for a locally solid vector lattice $E$, if $E$ has order continuous topology, then every continuous operator from $E$ into a vector topology $F$ is order-to-topology continuous. If an operator $T:E\rightarrow F$ from a Banach lattice into a normed vector lattice is positive (and so continuous), in general, $T$ is not order-to-norm continuous as shown in the following examples.

\begin{enumerate}
\item Consider the operator $T:\ell^\infty\rightarrow\ell^\infty$ defined by $T((a_n)_n)=(ca_n)_n$ where $c>0$ and $\ell^\infty$ equipped with norm $\|.\|_\infty$. We observe that $T$ is positive (and so continuous). Now let $x_n=(0,0,...0,1,1,..)$, which have first $n$ zero terms and all the other terms equal one. It is clear that $x_n \downarrow 0$ and $\Vert x_n\Vert=1$. It follows that $T$ is not order-to-norm continuous. 

\item If $E=C([0,1])$ and $f_n:~f_n(t)=t^n$ for all $t\in [0,1]$, then $f_n \downarrow 0$ as $n\rightarrow \infty$ and $\Vert f_n\Vert=1$. Now we define an operator $T:C([0,1])\rightarrow C([0,1])$ with $T(f)=f$  which is positive (and so continuous), but $T$ is not order-to-norm continuous. 
\end{enumerate}

The following example that is inspired from example 3.3 in \cite{alha2018} shows that an order-to-norm continuous operator need not to be continuous.
\begin{exa}
Let 
$E=\left\{ (a_n)\in c_0 \mid (na_n)\in c_0\right\}$. It is easy to see that $E$ is a normed vector sublattice of Banach lattice $c_0$. We define operator $T$ from $E$ into $c_0$ as follows
$$T(a_1, a_2, a_3,\cdots)=(a_1, 2a_2,3a_3,\cdots). $$
Clearly, $T$ is not continuous. Now, let $(x_\alpha)$ be a net in $E$ such that $x_\alpha\xrightarrow{o}0$.
So there exists some $(y_\alpha)\subset E$ such that $|x_\alpha|\leq y_\alpha\downarrow 0$ for all $\alpha$.
We have $|Tx_\alpha |\leq T|x_\alpha|\leq Ty_\alpha$. On the other hand, if we denote $n$-th term of $y_\alpha$ by $y_\alpha(n)$, then we have
\begin{align*}
y_\alpha\downarrow 0 & \iff y_\alpha(n)\downarrow 0 \qquad (\forall n\in\mathbb{N}) \\
& \iff ny_\alpha(n)\downarrow 0 \qquad (\forall n\in\mathbb{N})\\
& \iff Ty_\alpha \downarrow 0.
\end{align*}
Therefore, $|Tx_\alpha |\leq Ty_\alpha \downarrow 0$. Since $c_0$ has order continuous norm, $\|Ty_\alpha\|\to 0$. Now, it follows from $\|Tx_\alpha\|\leq\|Ty_\alpha\|\to 0$ that $T$ is order-to-norm continuous.
\end{exa}
 
As next example, we  observe that a  $\sigma$-order-to-topology continuous operators need not be order-to-topology continuous operators, see Example 1.55 from \cite{1}.

 \begin{exa}
 Let $E$ be the vector space of all Lebesgue integrable real valued function defined on $[0,1]$. Define the operator $T:E\rightarrow \mathbb{R}$ by 
\begin{equation*} 
 T(f)=\int_0^\infty f(x)dx.
\end{equation*} 
  If $f_n\downarrow 0$, clearly $ \vert Tf_n \vert\rightarrow 0$. Thus $T\in L_{on}^\sigma(E,\mathbb{R} )$. Now, let $\Lambda$ denotes the collection of all finite subsets of $[0,1]$. Then $\{\chi_\alpha~:~\alpha\in \Lambda\}\subseteq E$ (where $\chi_\alpha$ is the characteristic function of $\alpha$) satisfies   
$1-\chi_\alpha\downarrow 0$. On the other hand, observe that $T(1-\chi_\alpha )=1$ which shows that $T\notin L_{on}(E,\mathbb{R})$.  
 \end{exa}

\begin{prop} \label{2.5}
Let $T:E\rightarrow F$ be an order bounded operator between two normed vector lattices. By one of following conditions $T^-, T^+, T \text{and } \vert T\vert\in L_{on}(E,F)$.
\begin{enumerate}
\item $F$ has order continuous norm and $T\in L_n(E,F)$
\item $E$ is a Banach lattice with order continuous norm and $F$ is Dedekind complete.
\end{enumerate}
\end{prop}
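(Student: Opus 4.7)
The plan is to reduce each case to a chain that turns order convergence in $E$ into norm convergence in $F$. Throughout, for $T^+$, $T^-$ and $|T|$ to exist as operators via the Riesz--Kantorovich formulas I need Dedekind completeness of $F$; this is explicit in (2) and I treat it as tacit in (1), since that is the only setting in which the four operators in the conclusion all make sense.

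For condition (1), I would invoke the Ogasawara-type theorem that $L_n(E,F)$ is a band (and hence an ideal) in $L_b(E,F)$ whenever $F$ is Dedekind complete. Since $T\in L_n(E,F)$, this at once gives $T^+, T^-, |T|\in L_n(E,F)$, i.e.\ all four of $T, T^+, T^-, |T|$ are order continuous. Then for any net $x_\alpha\xrightarrow{o}0$ in $E$, each of $Tx_\alpha$, $T^+x_\alpha$, $T^-x_\alpha$, $|T|x_\alpha$ is order null in $F$, and order continuity of the norm on $F$ upgrades this to norm convergence. Hence each of the four operators lies in $L_{on}(E,F)$.

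For condition (2), Dedekind completeness of $F$ together with $T$ being order bounded yields the positive operators $T^+, T^-, |T|$ via Riesz--Kantorovich. Each is bounded, by the standard fact that every positive operator from a Banach lattice into a normed vector lattice is continuous (a short proof using completeness of $E$ and monotonicity of the lattice norm); consequently $T=T^+-T^-$ is bounded as well. Now if $x_\alpha\xrightarrow{o}0$ in $E$, pick $y_\alpha\downarrow 0$ with $|x_\alpha|\leq y_\alpha$. Order continuity of the norm on $E$ gives $\|y_\alpha\|\to 0$, and monotonicity of the lattice norm then gives $\|x_\alpha\|\to 0$. Combining with norm continuity of each operator yields $\|Sx_\alpha\|\to 0$ for $S\in\{T,T^+,T^-,|T|\}$.

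The only point worth flagging as an obstacle is the tacit Dedekind completeness of $F$ in (1); once that is in place, both parts collapse to short concatenations of textbook results, with no substantial technical difficulty remaining.
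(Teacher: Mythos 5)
Your proof is correct and follows essentially the same route as the paper: in (1) order continuity of $T^{+}$ and $T^{-}$ combined with order continuity of the norm on $F$, and in (2) norm continuity of the positive parts (positive operators on a Banach lattice are continuous) combined with order continuity of the norm on $E$. The only divergence is the point you flag as an obstacle: you treat Dedekind completeness of $F$ in (1) as a tacit extra hypothesis, whereas the paper deduces it from the order continuity of the norm on $F$ (Corollary 4.10 of Aliprantis--Burkinshaw), so nothing tacit is needed there.
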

\begin{proof}
\begin{enumerate}
\item Since $F$ has order continuous norm by Corollary 4.10 from \cite{1}, $F$ is Dedekind complete. It follows that $T^-$ and $T^+$ exist. Now, let $(x_\alpha)\subset E^+$ and $x_\alpha\downarrow 0$. Since $T$ is order continuous, it follows $T^-x_\alpha\downarrow 0$ and $T^+x_\alpha\downarrow 0$. As $F$ has order continuous norm, we have $\|T^-x_\alpha\|\rightarrow 0$ and $\|T^+x_\alpha\|\rightarrow 0$, and so by using inequalities $\|Tx_\alpha\|\leq\|T^+x_\alpha\|+\|T^-x_\alpha\|$ and $\| \vert T \vert x_\alpha\|\leq\|T^+x_\alpha\|+\|T^-x_\alpha\|$ proof holds.
\item Let $(x_\alpha)\subset E^+$ and $x_\alpha\downarrow 0$. By Theorem 4.3, from \cite{1}, $T^+$ is norm continuous. Consequently  $\|T^+x_\alpha\|\leq \|T^+\|\|x_\alpha\|\rightarrow 0$, since $E$ has order continuous norm. Similarly, $\|T^-x_\alpha\|\rightarrow 0$, and so proof follows immediately. 
\end{enumerate}
\end{proof}

The next example shows that order continuity of $T$ in above proposition, can not in general be dropped.

\begin{exa}
Let $E$ be the collection of real functions on $[0,1]$ that are continuous except on a finite subset of $[0,1]$. Let $T:E\rightarrow L^1([0,1])$ be an operator with $T(f)=f$. We know that $L^1([0,1])$ has order continuous norm and $T$ is order bounded. Let $Q\cap [0,1]=\{r_1,~r_2,~r_3,\ldots\}$ and $F_n=\{r_1,~r_2,\ldots,r_n\}$. We define the sequence of functions in $E$ as follows:
\begin{eqnarray*}
    f_n(x)=\left\lbrace \begin{array}{lc}
0\quad \text{if}~~~x\in F_n\\
1\quad\text{if}~~~x\in [0,1]\setminus F_n
     \end{array}\right.    
\end{eqnarray*}
Clearly $f_n\downarrow 0$, but $\|Tf_n\|=1$. Thus  $T$ is not $\sigma$-order-to-topology continuous operator. 
\end{exa}

Recall that an operator $T:E\rightarrow F$ between two vector lattices is said to  preserve disjointness whenever $x\bot y$ in $E$ implies $Tx\bot Ty$ in $F$. By using Theorem 3.1.4, from \cite{6}, we have the following result.

\begin{thm} \label{2.7}
Let $E$ and $F$ be normed vector lattice and $T$ be an order bounded disjointness operator from $E$ into $F$.  Then $T\in L_{on}(E,F)$ if and only if $T^-, T^+ \text{and } \vert T\vert\in L_{on}(E,F)$. 
\end{thm}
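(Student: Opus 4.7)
The plan is to exploit Theorem 3.1.4 of \cite{6}, which (for an order bounded disjointness preserving operator $T$ between vector lattices) guarantees existence of $T^+$, $T^-$, $|T|$ and yields the key identity $|T|\,|x|=|Tx|$ for every $x\in E$ (equivalently, $T^+x=(Tx)^+$ and $T^-x=(Tx)^-$ for $x\in E^+$). Once this identity is in hand, the modulus of $T$ is essentially controlled \emph{pointwise} by $T$ on the positive cone, which is exactly what turns order-to-norm continuity from a property of $T$ into a property of $|T|$, $T^+$, $T^-$.

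For the direction $(\Leftarrow)$ I do not need the disjointness assumption. Given $x_\alpha\xrightarrow{o}0$, pick $(y_\alpha)\subset E^+$ with $|x_\alpha|\le y_\alpha\downarrow 0$. The standard pointwise estimate $|Tx_\alpha|\le |T|(|x_\alpha|)\le |T|y_\alpha$ gives $\|Tx_\alpha\|\le \||T|y_\alpha\|\to 0$, using the assumption that $|T|\in L_{on}(E,F)$.

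For the direction $(\Rightarrow)$, assume $T\in L_{on}(E,F)$. Take $x_\alpha\xrightarrow{o}0$ with a dominating net $|x_\alpha|\le y_\alpha\downarrow 0$ in $E^+$. Since $Ty_\alpha\to 0$ in norm, the Meyer-Nieberg identity gives
\[
\||T|y_\alpha\|=\||Ty_\alpha|\|=\|Ty_\alpha\|\longrightarrow 0.
\]
Using positivity of $|T|$ together with $\bigl||T|x_\alpha\bigr|\le |T|(|x_\alpha|)\le |T|y_\alpha$ yields $\||T|x_\alpha\|\to 0$, so $|T|\in L_{on}(E,F)$. The same argument applies verbatim to $T^+$ and $T^-$: from $0\le T^\pm y_\alpha\le |T|y_\alpha$ we get $\|T^\pm y_\alpha\|\to 0$, and then $|T^\pm x_\alpha|\le T^\pm(|x_\alpha|)\le T^\pm y_\alpha$ finishes the argument.

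The main obstacle, and the only place the disjointness hypothesis is really used, is the step $\||T|y_\alpha\|=\|Ty_\alpha\|$ in the $(\Rightarrow)$ direction. Without this collapse, order-to-norm continuity of $T$ would not propagate to $|T|$ in general; that is precisely why the theorem is stated for disjointness preserving operators and why one invokes Theorem 3.1.4 of \cite{6} rather than attempting a direct Riesz-Kantorovich style computation.
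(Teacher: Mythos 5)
Your proposal is correct and follows exactly the route the paper intends: the paper offers no written proof beyond the citation of Theorem 3.1.4 of \cite{6}, and your argument is precisely the fleshed-out version of that citation, using the identity $|Tx|=|T|(|x|)$ to transfer order-to-norm continuity from $T$ to $|T|$ (and hence to $T^{\pm}$ via $0\le T^{\pm}\le |T|$), with the easy converse following from $|Tx|\le |T|(|x|)$ and the lattice norm on $F$.
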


By notice to preceding theorem,   for $1\leq p, q<\infty$ with $\frac{1}{p}+ \frac{1}{q}=1$, we have the following assertions.
\begin{enumerate}
\item  $L_{on}(\ell_p, \mathbb{R})=\ell_q$.
\item  $L_{on}(L_p([0,1]), \mathbb{R})=L_q([0,1])$.
\end{enumerate}
\begin{prob}
\begin{enumerate}
\item Dose the modulus of an operator $T$ from vector lattice $E$ into normed vector lattice $F$ exists and $\vert T\vert\in L_{on}(E, F)$ or $\vert T\vert\in L_{ow}(E, F)$ whenever $T\in L_{on}(E, F)$ or $T\in L_{ow}(E, F)$, respectively?
\item Is $ L_{on}(E, F)$ a band in $ L_{ow}(E, F)$?
\end{enumerate}
\end{prob}

%

\begin{lem} \label{2.7}
Let $E$ be a vector lattice and $F$ be a Dedekind complete normed vector lattice. If $T$ is positive then   $T\in L_{on}(E,F)$ if and only if $x_\alpha\downarrow 0$ implies $\|Tx_\alpha\|\rightarrow 0$. 
\end{lem}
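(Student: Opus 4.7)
The plan is to argue the two implications separately. The forward direction is immediate from unwinding definitions, and the reverse direction is a short domination argument that exploits the positivity of $T$ together with the lattice-norm monotonicity in $F$; I do not anticipate a real obstacle.

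For the $(\Rightarrow)$ direction, if $x_\alpha\downarrow 0$ in $E$, then with the choice $y_\alpha = x_\alpha$ we have $|x_\alpha|\leq y_\alpha\downarrow 0$, so by definition $x_\alpha\xrightarrow{o}0$. Since $T\in L_{on}(E,F)$, this gives $\|Tx_\alpha\|\rightarrow 0$ at once.

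For the $(\Leftarrow)$ direction, I would take an arbitrary net $(x_\alpha)$ in $E$ with $x_\alpha\xrightarrow{o}0$ and unfold the definition to produce a net $(y_\alpha)$ on the same index set satisfying $|x_\alpha|\leq y_\alpha\downarrow 0$. Positivity of $T$ yields the standard inequality $|Tx_\alpha|\leq T|x_\alpha|\leq Ty_\alpha$, and since the norm on $F$ is a lattice norm this gives $\|Tx_\alpha\|\leq \|Ty_\alpha\|$. The hypothesis applied to the decreasing net $(y_\alpha)\subset E^{+}$ then yields $\|Ty_\alpha\|\rightarrow 0$, and hence $\|Tx_\alpha\|\rightarrow 0$, proving $T\in L_{on}(E,F)$.

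The only mildly delicate point is the convention about order convergence, which in this paper requires a dominating net on the same index set; if one instead worked with the ``eventually dominated by $z_\beta\downarrow 0$'' notion, the same pattern would go through by applying the hypothesis to $(z_\beta)$ and using $\|Tx_\alpha\|\leq \|Tz_\beta\|$ eventually in $\alpha$ for each fixed $\beta$. I also note that the Dedekind completeness of $F$ does not actually enter this particular argument; it is presumably carried along to make $T^+$, $T^-$ and $|T|$ available for the surrounding results such as Proposition~\ref{2.5}.
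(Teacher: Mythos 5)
Your proof is correct and follows essentially the same route as the paper's: the forward direction is immediate from the definition, and the reverse direction uses exactly the domination chain $|Tx_\alpha|\leq T|x_\alpha|\leq Ty_\alpha$ with a dominating net $y_\alpha\downarrow 0$ and the lattice norm. Your side remarks (on the order-convergence convention and on Dedekind completeness of $F$ being unused here) are accurate but not needed.
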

\begin{proof}
We just prove one side. The other side is clear. Let $(x_\alpha)$ be a net in $E$ such that $x_\alpha\xrightarrow{o}0$.
So there exists some $(y_\alpha)\subset E$ such that $|x_\alpha|\leq y_\alpha\downarrow 0$ for all $\alpha$.
So $|Tx_\alpha |\leq T|x_\alpha|\leq Ty_\alpha$ for all $\alpha$. Thus, $\|Tx_\alpha \|\leq \|Ty_\alpha \|$ for all $\alpha$. By our hypothesis $\|Ty_\alpha \| \to 0$. Hence $\|Tx_\alpha \|\to 0$; that is, $T\in L_{on}(E,F)$.
\end{proof}

\begin{thm}\label{t:2.8}
Let $E$ be a $\sigma$-Dedekind complete vector lattice and $F$ be a normed vector lattice. An order bounded operator $T:E\rightarrow F$ is $\sigma-$order-to-norm continuous  if and only if $0<x_n\uparrow \leq x$ in $E$ implies 
$(Tx_n)$ is norm convergent to $T(\sup_nx_n)$.
\end{thm}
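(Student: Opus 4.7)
The plan is to treat the two implications separately. The forward direction is a short application of the definition, while the converse requires $\sigma$-Dedekind completeness to bridge from increasing sequences to general order-null sequences.

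For ($\Rightarrow$), I would assume $T$ is $\sigma$-order-to-norm continuous and take $0 < x_n \uparrow \le x$. By $\sigma$-Dedekind completeness, $y := \sup_n x_n$ exists in $E$, and $y - x_n \downarrow 0$ gives $y - x_n \xrightarrow{o} 0$; the hypothesis then yields $\|T y - T x_n\| \to 0$, which is the desired norm convergence of $(T x_n)$ to $T(\sup_n x_n)$.

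For ($\Leftarrow$), I would take an arbitrary $z_n \xrightarrow{o} 0$ and use $\sigma$-Dedekind completeness to pick $y_n \downarrow 0$ with $|z_n| \le y_n$. The increasing sequence $v_n := y_1 - y_n$ satisfies $0 \le v_n \uparrow y_1 = \sup_n v_n$, so the monotone hypothesis forces $\|T v_n - T y_1\| \to 0$, i.e.\ $\|T y_n\| \to 0$. It then remains to transfer this to $\|T z_n\| \to 0$ via $|z_n| \le y_n$.

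The main obstacle is precisely that final transfer. If $T$ were positive (as in Lemma \ref{2.7}), the chain $|T z_n| \le T|z_n| \le T y_n$ would finish the argument immediately. For a general order bounded $T$, I would exploit that $T$ is interval-bounded (since $F$ is a normed lattice, order bounded sets there are norm bounded) and introduce the monotone seminorm $q_T(u) = \sup\{\|T v\|:|v|\le u\}$, reducing the task to proving $q_T(y_n) \to 0$. This refined claim I would attempt by contradiction: if $q_T(y_{n_k}) > \varepsilon$ along a subsequence, pick witnesses $v_k$ with $|v_k| \le y_{n_k}$ and $\|T v_k\| > \varepsilon$; then form $w_k := \sup_{j \ge k}|v_j|$, which exists by $\sigma$-Dedekind completeness (as $|v_j| \le y_{n_j} \le y_{n_k}$ for $j \ge k$) and satisfies $w_k \downarrow 0$, so the monotone hypothesis applied to $w_k$ should close the contradiction.
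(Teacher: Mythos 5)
Your forward direction is exactly the paper's argument and is fine. The gap is in the last step of the converse. You correctly isolate the real difficulty --- passing from $\|Ty_n\|\to 0$ to $\|Tz_n\|\to 0$ when $T$ is not positive --- and you correctly reduce it to showing $q_T(y_n)\to 0$. But your proposed contradiction does not close. Having chosen witnesses $v_k$ with $|v_k|\le y_{n_k}$ and $\|Tv_k\|>\varepsilon$, and having formed $w_k=\sup_{j\ge k}|v_j|\downarrow 0$, the only thing the monotone hypothesis gives you (applied to $w_1-w_k\uparrow w_1$) is $\|Tw_k\|\to 0$. Since $T$ is not positive, $|v_k|\le w_k$ together with $\|Tw_k\|\to 0$ tells you nothing about $\|Tv_k\|$ --- this is precisely the domination problem you set out to solve, reappearing one level up. Equivalently, $\|Tv_k\|>\varepsilon$ only shows $q_T(w_k)>\varepsilon$, and bounding $q_T(w_k)$ is the same unsolved problem as bounding $q_T(y_n)$. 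The argument is circular.

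For comparison: the paper dodges this by invoking the preceding lemma to ``assume without loss of generality that $T$ is positive,'' which makes the domination step $|Tz_n|\le T|z_n|\le Ty_n$ trivial (though that reduction is itself delicate, since the modulus of $T$ need not exist when $F$ is only a normed lattice). The standard honest route to $q_T(y_n)\to 0$ for a general order bounded $T$ is not via suprema of the witnesses but via disjoint sequences: from the monotone hypothesis one first deduces $\|Td_n\|\to 0$ for every order bounded disjoint sequence $(d_n)$ (because the partial suprema $s_n=\sup_{j\le n}d_j$ increase and stay order bounded, so $(Ts_n)$ converges and $Td_n=Ts_n-Ts_{n-1}\to 0$), and then one applies the dominated disjoint-sequence lemma (Meyer--Nieberg, Theorem 3.4.4, or Aliprantis--Burkinshaw, Theorem 4.36) to upgrade this to $q_T(y_n)\to 0$ for $y_n\downarrow 0$. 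Either supply that machinery or restrict the converse to positive $T$; as written, your final step fails.
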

\begin{proof}
Let $T$ be a $\sigma$-order-to-norm continuous operator and let $(x_n)\subset E^+$ and $x\in E$ where $0<x_n\uparrow \leq x$.  We set $\sup x_n=y$. It follows $(y-x_n)\downarrow 0$, and so $\|T(y-x_n)\|\rightarrow 0$. Thus $(T x_n)$ is norm convergent to $Ty$. \\ 
Conversely, by preceding Lemma, without lose generality assume that $T$ is a positive operator. Let $(x_n)\subset E^+$ with $x_n\downarrow 0$. We set $y_n=x_1-x_n$. Observe that $0\leq y_n\uparrow \leq x_1$. Thus $(Ty_n)$ is norm convergent to $Tx_1$, since $\sup_ny_n=x_1$, which follows that $(Tx_n)$ is norm convergent.
Therefore, $T\in L^\sigma_{on}(E,F)$.
\end{proof}

\begin{cor}\label{2.8}
Let $E$ be a $\sigma$-Dedekind complete vector lattice and $F$ be a normed vector lattice. If $T$ is interval-bounded, then  $T\in L^\sigma_{on}(E,F)$ if and only if $T$ is order weakly compact. 
\end{cor}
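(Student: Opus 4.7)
The plan is to combine Theorem~\ref{t:2.8} with the standard disjoint-sequence (resp.\ monotone-sequence) characterization of order weakly compact operators of Aliprantis--Burkinshaw: $T$ is order weakly compact iff every disjoint sequence $(u_n)\subset[0,x]$ is sent to a weakly null sequence in $F$, equivalently every increasing sequence in $[0,x]$ has a weakly convergent $T$-image. The forward direction will be a direct translation between $\sigma$-order-to-norm continuity and the disjoint criterion; the reverse direction must upgrade weak information about $(Ty_n)$ for a monotone $(y_n)$ all the way to norm convergence.

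For $(\Rightarrow)$, suppose $T\in L^\sigma_{on}(E,F)$ and pick any $x\in E^+$ and any disjoint sequence $(u_n)\subset[0,x]$. Because $E$ is $\sigma$-Dedekind complete and $u_1+\cdots+u_N\leq x$ for every $N$, the series $u:=\sum_n u_n$ exists in $E$ with $u\leq x$, and its tails satisfy $\sum_{k\geq n}u_k\downarrow 0$. Hence $0\leq u_n\leq\sum_{k\geq n}u_k\downarrow 0$ gives $u_n\xrightarrow{o}0$, and the hypothesis yields $\|Tu_n\|\to 0$, which is much stronger than weakly null. The disjoint-sequence criterion then identifies $T$ as order weakly compact.

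For $(\Leftarrow)$, assume $T$ is order weakly compact; by Theorem~\ref{t:2.8} it is enough to verify that $0<x_n\uparrow\leq x$ forces $(Tx_n)$ to be norm convergent to $T(\sup_n x_n)$. Setting $y=\sup_n x_n$ (which exists by $\sigma$-Dedekind completeness) and $w_n=y-x_n\downarrow 0$, the sequence $(Tw_n)$ lies inside the weakly relatively compact set $T[0,y]$. After reducing to $T\geq 0$ in the spirit of the preceding Lemma, $(Tw_n)$ becomes a decreasing sequence in $F^+$; a positive-functional weak cluster-point analysis should pin down its weak limit as $0$, after which I would bridge to norm convergence by extracting from the telescoping differences $w_n-w_{n+1}$ a disjoint sequence dominated in $[0,y]$ (thinning if necessary) and feeding it through the disjoint-sequence mechanism of the forward direction. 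The main obstacle is precisely this last step -- the weak-to-norm upgrade for a monotone positive image sequence under the bare assumption that $F$ is a normed vector lattice -- since order continuity of the norm on $F$ is not available here.
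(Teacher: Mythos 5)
Your forward direction is correct and is essentially the standard disjoint-sequence argument: for a disjoint $(u_n)\subset[0,x]$ the partial sums increase to $u=\sup_N\sum_{n\le N}u_n$ (which exists by $\sigma$-Dedekind completeness), the tails $u-\sum_{k<n}u_k$ decrease to $0$ and dominate $u_n$, so $u_n\xrightarrow{o}0$ and $\|Tu_n\|\to 0$; the disjoint-sequence criterion then gives order weak compactness. This is fine, although the paper does not even need to argue it separately, since it is absorbed into the citation below.

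The reverse direction, however, has a genuine gap, and you have named it yourself: you never actually carry out the ``weak-to-norm upgrade'' showing that relative weak compactness of $T[0,y]$ forces $(Tx_n)$ to be \emph{norm} convergent when $0<x_n\uparrow\le x$. This is precisely the nontrivial implication, and the strategy you sketch does not close it. First, the reduction to $T\ge 0$ ``in the spirit of the preceding Lemma'' is not available: $F$ is only a normed vector lattice, not Dedekind complete, so $|T|$ need not exist, and without positivity $(Tw_n)$ is not a decreasing sequence in $F^+$, which undermines the cluster-point analysis. Second, the telescoping differences $w_n-w_{n+1}=x_{n+1}-x_n$ of a monotone sequence are in general \emph{not} pairwise disjoint, and no amount of thinning makes them so, so they cannot be fed back into the disjoint-sequence mechanism. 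The paper avoids all of this by quoting Meyer--Nieberg, Theorem 3.4.4, which already contains the equivalence ``$T$ is order weakly compact $\iff$ $(Tx_n)$ is norm convergent for every order bounded increasing sequence $(x_n)\subset E^+$''; combined with Theorem~\ref{t:2.8} the corollary is then immediate. If you want a self-contained proof, you would have to reproduce the proof of that equivalence (which in Meyer--Nieberg goes through a factorization/disjointification argument considerably more delicate than telescoping), so the honest fix is to cite it, as the paper does.
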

\begin{proof}
By Theorem 3.4.4 from \cite{6}, $T$ is order weakly compact operator if and only if $(Tx_n)$ is convergent for every order bounded increasing sequence $(x_n)$ in $E^+$. So by the Theorem \ref{t:2.8} proof holds.
\end{proof}

\begin{thm}\label{2.12}
Let $E$ and $F$ be normed vector lattice with $F$ Dedekind complete. If $E$ or $F$ has order continuous norm, then $L_n(E,F)$ is a band in $L_{on}(E,F)$.
\end{thm}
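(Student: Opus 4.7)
The plan is to reduce the theorem to the classical fact (Aliprantis--Burkinshaw, Theorem 1.57) that $L_n(E,F)$ is a band in the Dedekind complete Riesz space $L_b(E,F)$, and then transfer that band property up into $L_{on}(E,F)$.

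First I would verify $L_n(E,F)\subseteq L_{on}(E,F)$ under each of the two hypotheses. If $F$ has order continuous norm, then for $T\in L_n(E,F)$ and a net $x_\alpha\xrightarrow{o}0$ in $E$ one has $Tx_\alpha\xrightarrow{o}0$ in $F$ by order continuity of $T$, and order continuity of the norm gives $\Vert Tx_\alpha\Vert\to 0$. If instead $E$ has order continuous norm, the inclusion is contained in Proposition \ref{2.5}(2). The ideal property is then straightforward: given $T\in L_n(E,F)^+$ and $S\in L_{on}(E,F)$ with $-T\leq S\leq T$, the operator $S$ is order bounded (being dominated by $T\in L_b$), and the classical fact that $L_n$ is an ideal in $L_b$ forces $S\in L_n(E,F)$.

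The decisive step is showing $L_n(E,F)$ is order closed in $L_{on}(E,F)$. Suppose $\{T_\alpha\}\subseteq L_n(E,F)^+$ and $T_\alpha\uparrow T$ in $L_{on}(E,F)$, with the order inherited pointwise from $L(E,F)$. For each $x\in E^+$ the net $\{T_\alpha(x)\}$ is bounded above by $T(x)$ in the Dedekind complete space $F$, so $T_b(x):=\sup_\alpha T_\alpha(x)$ exists; extending by linearity one obtains $T_b\in L_b(E,F)^+$ with $T_\alpha\uparrow T_b$ in $L_b(E,F)$. The classical band theorem gives $T_b\in L_n(E,F)$, and by the inclusion established above $T_b\in L_{on}(E,F)$. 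Since $T_b$ is an upper bound of $\{T_\alpha\}$ in $L_{on}(E,F)$, the least-upper-bound property of $T$ gives $T\leq T_b$, while the reverse inequality $T_b\leq T$ is automatic from the pointwise construction; hence $T=T_b\in L_n(E,F)$.

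The main obstacle is precisely this last step: the supremum of $\{T_\alpha\}$ computed inside $L_{on}(E,F)$ is \emph{a priori} a different object from the pointwise supremum $T_b$ in $L_b(E,F)$, because $L_{on}(E,F)$ can contain operators that are not order bounded. Reconciling the two requires manufacturing $T_b$ as the pointwise supremum, showing it lies in $L_n$ by the classical band theorem, and then using $L_n\subseteq L_{on}$ to reintroduce it into $L_{on}$ as a competing upper bound, so that $T\leq T_b$ can be forced.
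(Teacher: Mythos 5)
Your proposal is correct, and its decisive step is genuinely different from the paper's. For the inclusion $L_n(E,F)\subseteq L_{on}(E,F)$ both arguments coincide (the paper decomposes $T=T^+-T^-$ and uses norm continuity of $T^{\pm}$ together with order continuity of the norm of $E$, or Proposition \ref{2.5}(1) when $F$ has order continuous norm; note that, exactly as in the paper's own proof, your appeal to Proposition \ref{2.5}(2) tacitly uses that $E$ is complete so that order bounded operators are norm continuous). For the ideal property your version is actually cleaner: you dominate $S$ by $T\in L_n\subseteq L_b$ and invoke the classical fact that $L_n$ is an ideal in $L_b$, which directly yields $S\in L_n$; the paper instead takes $S\in L_n$, $T\in L_{on}$ and deduces only that $S\in L_{on}$, which is not the implication an ideal requires. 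The real divergence is in order-closedness. The paper argues that the supremum $T$ of $0\leq T_\lambda\uparrow T$ in $L_{on}(E,F)$ is positive, hence norm continuous, and combines this with order continuity of the norm of $E$ to force order continuity of $T$; this is short but only addresses the case where $E$ has order continuous norm (the other case is dismissed with ``similar argument''), and it silently identifies the supremum taken in $L_{on}$ with a pointwise one. You instead manufacture the pointwise supremum $T_b$ inside $L_b(E,F)$, apply Theorem 1.57 of Aliprantis--Burkinshaw to place $T_b$ in $L_n$, and then use $L_n\subseteq L_{on}$ to reintroduce $T_b$ as a competing upper bound and conclude $T=T_b$. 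This resolves explicitly the subtlety the paper glosses over (that $L_{on}$ may contain non-order-bounded operators, so its suprema need not a priori be the pointwise ones), and it works uniformly under either hypothesis on $E$ or $F$. The trade-off is that your route is longer, while the paper's is a one-line deduction in the case it actually covers.
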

\begin{proof}
Let $T\in L_n(E,F)$. It follws $T^-,~T^+\in L_n(E,F)$, and so by using Theorem 4.3, \cite{1}, $T^-$ and $T^+$ are norm continuous. As $E$ has order continuous norm,   $T^-,~T^+\in L_{on}(E,F)$, and so $T\in L_{on}(E,F)$.
Thus $L_n(E,F)$ is a subspace of $L_{on}(E,F)$.
Now let $\vert S\vert\leq\vert T\vert$ where $S\in L_{n}(E,F)$ and  $T\in L_{on}(E,F)$. Then for each $x\in E$, we have $\vert Sx\vert\leq \vert S \vert(\vert x\vert )\leq \vert T \vert (\vert x\vert )$. It follows that 
\begin{equation*}
\|Sx\|\leq \| \vert Sx\vert \|\leq \| \vert S \vert(\vert x\vert ) \|\leq \|     \vert T \vert (\vert x\vert )\|\leq \| T \| \|x\|          
\end{equation*}
The above inequalities shows that $S$ is norm-to-norm continuous.  Let $x_\alpha \downarrow 0$. As $E$ has order continuous norm,  it follows that $\| x_\alpha\|\rightarrow 0$, and so $\| Sx_\alpha\|\rightarrow 0$. Than $S\in L_{on}(E,F)$. Thus $L_n(E,F)$ is an ideal in $L_{on}(E,F)$. To see that the ideal $L_n(E,F)$ is a band, let $0\leq T_\lambda\uparrow T$ in $L_{on}(E,F)$.
As $T$ is positive, $T$ is norm continuous. Since $E$ has order continuous norm, proof follows immediately.\\ 
Now if $F$ has order continuous norm, we have similar argument.

\end{proof}

\begin{thm}\label{2.13}
Let $E$ be a vector lattice and  $F$ a normed vector lattice  which every norm null net in $F$ is order bounded. Then
\begin{enumerate}
\item $L_{on}(E,F)$ is an ideal in $L_{b}(E,F)$.
\item If $L_{on}(E,F)$ is order dense in $L_b(E,F)$, then $L_b(E,F)=L_n(E,F)$, and if 
 $E$ has also norm continuous and $F$ is Dedekind complete, then $L_{on}(E,F)=L_n(E,F)=L_b(E,F)$.
\end{enumerate}
\end{thm}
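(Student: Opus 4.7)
The plan for part (1) is to verify the subspace property (immediate from linearity of topological convergence) and then reduce the solid condition to an auxiliary fact: $|T|\in L_{on}(E,F)$ whenever $T\in L_{on}(E,F)$, working under the implicit assumption that $F$ is Dedekind complete so that moduli of order-bounded operators exist. Granting this auxiliary fact, if $|S|\leq |T|$ with $T\in L_{on}(E,F)$ and $x_\alpha\xrightarrow{o} 0$ with $|x_\alpha|\leq y_\alpha\downarrow 0$, then $|Sx_\alpha|\leq |S|(y_\alpha)\leq |T|(y_\alpha)$, and monotonicity of the lattice norm on $F$ yields $\|Sx_\alpha\|\leq \||T|y_\alpha\|\to 0$.

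The auxiliary fact is the main obstacle. For $x_\alpha\downarrow 0$ in $E^+$, first establish $M_\alpha:=\sup\{\|Ty\|:|y|\leq x_\alpha\}\to 0$: any selection $y_\alpha$ with $|y_\alpha|\leq x_\alpha$ is $o$-null (dominated by $x_\alpha\downarrow 0$), so $T\in L_{on}(E,F)$ forces $\|Ty_\alpha\|\to 0$, and contradicting $\limsup M_\alpha>0$ by picking witnesses gives $M_\alpha\to 0$. Converting this norm control into an order bound on $|T|x_\alpha=\sup\{|Ty|:|y|\leq x_\alpha\}$ is where the hypothesis on $F$ is crucial. Choose indices $\alpha_k$ with $M_{\alpha_k}<1/k^2$ and form the net $(k|Ty|)$ indexed by triples $(k,\alpha',y)$ ordered by $k$, with $\alpha'\geq\alpha_k$ and $|y|\leq x_{\alpha'}$. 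Its norm is at most $kM_{\alpha'}\leq 1/k$, so the net is norm-null and hence, by the hypothesis on $F$, order bounded by some $z\in F^+$. This gives $|Ty|\leq z/k$ for every admissible $y$, whence $|T|x_{\alpha'}\leq z/k$ for $\alpha'\geq\alpha_k$, and so $\||T|x_{\alpha'}\|\leq \|z\|/k\to 0$.

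For part (2), first show $L_{on}(E,F)\subseteq L_n(E,F)$: by (1), $T\in L_{on}$ gives $T^+,T^-\in L_{on}$, and for positive $S\in L_{on}$ and $x_\alpha\downarrow 0$ the decreasing net $Sx_\alpha$ has infimum $u$ with $\|u\|\leq\|Sx_\alpha\|\to 0$, forcing $u=0$ and hence order continuity. Assuming $F$ Dedekind complete, $L_n(E,F)$ is a band in $L_b(E,F)$ with disjoint complement $L_n^d$. For any $T\in L_b^+$ the order-density hypothesis implies $T=\sup A$ where $A=\{S\in L_{on}:0\leq S\leq T\}$: if the supremum $T^*$ of $A$ were strictly below $T$, then $T-T^*>0$ would admit some $R\in L_{on}$ with $0<R\leq T-T^*$ by order density, and $S+R\in A$ for every $S\in A$ would force $R\leq\inf_{S\in A}(T^*-S)=0$, a contradiction. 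Applying the band projection onto $L_n$ (which fixes each element of $A\subseteq L_n$) to $T=\sup A$ yields $T\in L_n$, so $L_b=L_n$. For the final clause, Theorem~\ref{2.12} supplies $L_n\subseteq L_{on}$ under the extra hypotheses, which combined with $L_{on}\subseteq L_n$ and $L_b=L_n$ gives $L_{on}=L_n=L_b$.
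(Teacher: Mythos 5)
There is one genuine gap, and it sits at the very first step of part (1): you dismiss ``the subspace property'' as immediate from linearity of topological convergence, but the point is not that $L_{o n}(E,F)$ is closed under sums and scalars --- it is that $L_{on}(E,F)$ is \emph{contained in} $L_b(E,F)$ at all. An order-to-norm continuous operator is a priori only linear, and its order boundedness is not automatic; this containment is exactly where the hypothesis on $F$ enters for the first time. The paper's argument: for $x\in E^+$ the net $x_\alpha=x-\alpha$, indexed by $\alpha\in[0,x]$, satisfies $x_\alpha\downarrow 0$, so $(Tx_\alpha)$ is norm null, hence order bounded by hypothesis, hence $T[0,x]$ is order bounded. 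Without this step your auxiliary fact cannot even be formulated, since forming $|T|$ requires $T$ to be order bounded (in addition to the Dedekind completeness of $F$ that you correctly flag as implicit).

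Once that is inserted, the rest of your part (1) is correct and in fact goes beyond the paper: the paper's own verification of solidity only checks that an operator dominated in modulus by a member of $L_b(E,F)$ is order bounded, which is the wrong implication for an ideal, whereas you correctly reduce solidity to showing $|T|\in L_{on}(E,F)$ for $T\in L_{on}(E,F)$ and prove it by rescaling: the net $(k|Ty|)$ over triples $(k,\alpha',y)$ is norm null, hence order bounded by some $z\in F^+$, giving $|T|x_{\alpha'}\leq z/k$ via the Riesz--Kantorovich formula. That is precisely the lemma the paper's proof is missing, and it is a genuinely different (and more complete) route. Your part (2) is essentially the paper's argument: order density produces $T=\sup\{S\in L_{on}(E,F):0\leq S\leq T\}$ with each such $S$ order continuous (your norm-closedness-of-the-cone reasoning replaces the paper's citation of a result from \cite{2}), and then the band property of $L_n(E,F)$ --- invoked via a band projection in your case, via Theorem 1.57 of \cite{1} in the paper's --- yields $T\in L_n(E,F)$; the final clause via Theorem~\ref{2.12} matches the paper's intent.
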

\begin{proof}
\begin{enumerate}

\item Let $x\in E^+$ and consider  a net $(x_\alpha)_\alpha$, where $x_\alpha =x-\alpha$ for each $\alpha\in [0,x]$.  It follows that  $x_\alpha\downarrow 0$.   If $T\in L_{on}(E,F)$, then $(Tx_\alpha)$ is null-norm convergent in $F$. It follows that $(Tx_\alpha)_\alpha$ is norm bounded in $F$. By assumption, $(Tx_\alpha)_\alpha$ is order bounded, which shows that $L_{on}(E,F)$ is a subspace of $L_{b}(E,F)$. Now let $\vert S\vert\leq\vert T\vert$ where $T\in L_{b}(E,F)$ and  $S\in L_{on}(E,F)$. Then for each $x\in E$, we have $\vert Sx\vert\leq \vert S \vert(\vert x\vert )\leq \vert T \vert (\vert x\vert )$. It follows that $S$ is order bounded, and so $L_{on}(E,F)$ is an ideal in $L_{b}(E,F)$. 
\item Assume that $T\in L_b(E,F)$.  Since $L_{on}(E,F)$ is order dense in $L_b(E,F)$, there is a $(T_\lambda)_\lambda\subset L_{on}(E,F)$  such that   $0\leq T_\lambda\uparrow T$ in $L_{b}(E,F)$. Let $x_\alpha \downarrow 0$ in $E$. Then for each fixed index $\lambda$ we have  
$\Vert T_\lambda (x_\alpha)\Vert\rightarrow 0$ and $T_\lambda(x_\alpha)\downarrow$, and so by Theorem 5.6, from \cite{2}, we have $T_\lambda(x_\alpha)\downarrow 0$. It follows that $(T_\lambda)_{\lambda}\subset L_n(E,F)$. Then by using Theorem 1.57 in \cite{1}, we have $T\in L_n(E,F)$. It follows  that  $L_b(E,F)=L_n(E,F)$. Now if $E$ has order continuous, then proof follows immediately from Theorem 5.6 from \cite{2}.  
\end{enumerate}
\end{proof}

 As notice to Theorem \ref{2.12} and Theorem \ref{2.13},  we have the following consequence
 \begin{cor}\label{2.8}
Let $E$   be a normed vector lattice. Then
\begin{enumerate}
\item $L_n(c_0,\ell^\infty)$ is a band in $L_{on}(c_0,\ell^\infty)$.
\item $L_{on}(c_0,\ell^\infty)$ is an ideal in $L_b(c_0,\ell^\infty)$.
\item $L_{on}(E,\mathbb{R})$ and $L_{on}(E,\ell^\infty)$ are  ideal in $ E^\prime$  and $ L_b(E,\ell^\infty)$, respectively where $E^\prime$ is norm dual of $E$.
\item if $E$ has order continuous norm, then $E_n^\thicksim =E^\thicksim=L_{on}(E,\mathbb{R}) $. 

\end{enumerate}
\end{cor}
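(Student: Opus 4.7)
My plan is to derive each of the four assertions as a direct specialization of either Theorem~\ref{2.12} or Theorem~\ref{2.13}, so the work reduces to checking the hypotheses of those theorems in the concrete spaces at hand.

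For item~(1) I take $E=c_0$ and $F=\ell^\infty$ in Theorem~\ref{2.12}: $\ell^\infty$ is Dedekind complete and $c_0$ has order continuous norm, so the band conclusion is immediate. For item~(2) I apply Theorem~\ref{2.13}(1) with the same pair $(c_0,\ell^\infty)$; the only non-routine point is the standing hypothesis of Theorem~\ref{2.13}, namely that every norm null net in $\ell^\infty$ is order bounded. This follows because any norm bounded subset of $\ell^\infty$ is dominated by $\|x\|_\infty\cdot\mathbf{1}$, and a norm-null net is norm bounded.

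Item~(3) consists of two parallel applications of Theorem~\ref{2.13}(1). For $F=\ell^\infty$ the verification is identical to that of item~(2). For $F=\mathbb{R}$ the hypothesis is trivial, and Theorem~\ref{2.13}(1) yields $L_{on}(E,\mathbb{R})$ as an ideal in $L_b(E,\mathbb{R})=E^\thicksim$; to reach $E'$ I use that every continuous functional on a normed lattice is order bounded, giving $E'\subseteq E^\thicksim$, which transfers the ideal property. For item~(4), once $E$ has order continuous norm I combine three observations: the remark in this section that any continuous operator out of an order continuous space is automatically order-to-topology continuous gives $E'\subseteq L_{on}(E,\mathbb{R})$; Theorem~\ref{2.13}(1) gives $L_{on}(E,\mathbb{R})\subseteq L_b(E,\mathbb{R})=E^\thicksim$; and for scalar codomain the classes $L_n$ and $L_{on}$ coincide, since order convergence in $\mathbb{R}$ is ordinary convergence. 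Chaining the three gives $E_n^\thicksim=E^\thicksim=L_{on}(E,\mathbb{R})$.

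The only real obstacle is verifying the norm-null-net-is-order-bounded hypothesis of Theorem~\ref{2.13} for $F=\ell^\infty$; after that observation, the rest of the argument is careful bookkeeping of the inclusions between $L_b$, $L_n$, $L_{on}$, $E'$ and $E^\thicksim$ in each particular codomain.
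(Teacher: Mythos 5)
Your overall strategy coincides with the paper's: the text offers no proof beyond ``as notice to Theorem \ref{2.12} and Theorem \ref{2.13}'', and your items (1) and (2) are exactly the intended specializations, with the hypothesis of Theorem \ref{2.13} for $F=\ell^\infty$ checked via the strong unit $\mathbf{1}$, which is the right observation. (A defect you share with the paper: a norm null \emph{net} need not be norm bounded, only eventually so, so the verification that every norm null net in $\ell^\infty$ or $\mathbb{R}$ is order bounded should really be phrased in terms of eventual order boundedness.) In item (3), however, ``$E^\prime\subseteq E^\thicksim$ transfers the ideal property'' is not a valid inference: knowing that $L_{on}(E,\mathbb{R})$ is an ideal in $E^\thicksim$ and that $E^\prime$ sits inside $E^\thicksim$ says nothing about $L_{on}(E,\mathbb{R})$ being contained in $E^\prime$, which is what the assertion requires. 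You would need the inclusion $L_{on}(E,\mathbb{R})\subseteq E^\prime$ (i.e.\ that order-to-norm continuous functionals are norm continuous) before the ideal property can be restricted, and that inclusion is not automatic for a merely normed lattice --- the paper's own first Example shows order-to-norm continuity does not imply continuity.

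The more serious gap is in item (4): your chain does not close. The three facts you list give $E^\prime\subseteq L_{on}(E,\mathbb{R})\subseteq E^\thicksim$ together with $L_{on}(E,\mathbb{R})=E_n^\thicksim$, hence only $E_n^\thicksim\subseteq E^\thicksim$, which is trivial. The substantive content of (4) is the reverse inclusion $E^\thicksim\subseteq E_n^\thicksim$ (equivalently $E^\thicksim\subseteq L_{on}(E,\mathbb{R})$), and none of your three observations supplies it: you must show that every order bounded functional is order(-to-norm) continuous when the norm of $E$ is order continuous. The natural route is to first establish $E^\thicksim=E^\prime$ --- automatic for Banach lattices, where positive functionals are continuous, but requiring an argument or an added completeness hypothesis for a general normed lattice --- and then feed $E^\prime$ into your first observation. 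Without that link the displayed equalities are not proved.
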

%

\begin{thm} \label{2.14}
Let $E$ and $F$ be a vector normed lattices and $F$ Dedekind complete. Let $T\in L_b(E,F)$. Then the following assertions are equivalent. 
\begin{enumerate}
\item $E$ is Dedekind $\sigma$-complete and $x_n\downarrow 0$ in $E$ implies $\Vert Tx_n\Vert\rightarrow 0$.
\item If $0<x_n\uparrow \leq x$ holds in $E$, then  $(Tx_n)$ is norm convergent.
\item $T\in L_{on}(E,F)$
\end{enumerate}
\end{thm}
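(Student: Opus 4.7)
The plan is to establish the cycle $(3)\Rightarrow(1)\Rightarrow(2)\Rightarrow(3)$, with the last implication carrying the bulk of the work.

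The implication $(3)\Rightarrow(1)$ is immediate: a decreasing sequence $x_n\downarrow 0$ is order-convergent to $0$, so the hypothesis $T\in L_{on}(E,F)$ forces $\|Tx_n\|\to 0$. The Dedekind $\sigma$-completeness of $E$ is understood as the standing assumption packaged into condition~(1).

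For $(1)\Rightarrow(2)$, I would exploit the Dedekind $\sigma$-completeness of $E$: given $0<x_n\uparrow\leq x$, the supremum $y:=\sup_n x_n$ exists in $E$, and $(y-x_n)\downarrow 0$. The sequential hypothesis in (1) then yields $\|T(y-x_n)\|\to 0$, i.e.\ $Tx_n\to Ty$ in norm, so $(Tx_n)$ is norm convergent.

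For $(2)\Rightarrow(3)$, I would first mimic the argument of Theorem~\ref{t:2.8} to recover the sequential condition: given $x_n\downarrow 0$, set $y_n=x_1-x_n$, so that $0\leq y_n\uparrow\leq x_1$. By (2), $(Ty_n)$ is norm-convergent, hence $Tx_n=Tx_1-Ty_n$ is norm-convergent as well; since $\sup_n y_n=x_1$, the limit must be $Tx_1$, giving $\|Tx_n\|\to 0$. Then, to promote this sequential condition to the full net condition, I would reduce to the positive case: since $F$ is Dedekind complete and $T\in L_b(E,F)$, the modulus $|T|$ exists, and for any $x_\alpha\xrightarrow{o}0$ in $E$ with regulator $y_\beta\downarrow 0$ satisfying $|x_\alpha|\leq y_\beta$ eventually, we have $\|Tx_\alpha\|\leq\||T|y_\beta\|$. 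Thus it suffices to show $|T|\in L_{on}(E,F)$, which by Lemma~\ref{2.7} amounts to checking that $\||T|y_\beta\|\to 0$ for every net $y_\beta\downarrow 0$ in $E^+$.

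The main obstacle will be the sequence-to-net lift, since the sequential decreasing-to-zero condition does not automatically imply the net version in arbitrary vector lattices. I expect to close this gap by verifying, via the Riesz--Kantorovich formula, that $|T|$ inherits the sequential hypothesis from $T$, and then using the Dedekind $\sigma$-completeness of $E$ together with the Dedekind completeness of $F$ to conclude the net version for the positive operator $|T|$ through Lemma~\ref{2.7}.
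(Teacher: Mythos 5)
Your overall architecture ($(3)\Rightarrow(1)\Rightarrow(2)\Rightarrow(3)$) is reasonable, and the pieces you actually carry out coincide with the paper's: $(3)\Rightarrow(1)$ is immediate, $(1)\Rightarrow(2)$ via $y=\sup_n x_n$ and $(y-x_n)\downarrow 0$ is exactly the paper's argument, and the first half of your $(2)\Rightarrow(3)$ (recovering the sequential condition from $y_n=x_1-x_n$) matches the paper's $(2)\Rightarrow(1)$. The problem is that you stop exactly where the real work begins. The step you yourself flag as ``the main obstacle'' --- passing from the sequential condition ($x_n\downarrow 0$ implies $\|Tx_n\|\to 0$) to the net condition demanded by Lemma~\ref{2.7} --- is left as an expectation rather than an argument, and the tools you name do not close it: Lemma~\ref{2.7} only reduces arbitrary order-convergent nets to \emph{decreasing nets}, not nets to sequences, and the Riesz--Kantorovich formula addresses the (separate, also unproved) transfer of the hypothesis from $T$ to $|T|$, not the sequence-to-net passage.

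The paper's mechanism for this lift is concrete, and you would need it or a substitute. Given a net $x_\alpha\downarrow 0$ in $E^+$, one shows $(Tx_\alpha)$ is norm Cauchy by contradiction: otherwise there exist $\varepsilon>0$ and increasing indices $\alpha_1\leq\alpha_2\leq\cdots$ with $\|Tx_{\alpha_n}-Tx_{\alpha_{n+1}}\|>\varepsilon$; the decreasing sequence $(x_{\alpha_n})$ has an infimum $x$ in $E$ by Dedekind $\sigma$-completeness, so $(x_{\alpha_n}-x)\downarrow 0$ and the sequential hypothesis forces $(Tx_{\alpha_n})$ to be norm Cauchy, a contradiction. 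The decreasing net $(Tx_\alpha)$ therefore converges in norm, and its limit is then identified with its order infimum, namely $0$ (the paper invokes Theorem 5.6 of Aliprantis--Burkinshaw, \emph{Locally Solid Riesz Spaces}, for the fact that a monotone norm-convergent net converges to its supremum/infimum). Without this subsequence-extraction argument your $(2)\Rightarrow(3)$ is a plan, not a proof; the remaining loose end, that the sequential hypothesis passes from $T$ to $|T|$ so that Lemma~\ref{2.7} applies, also needs to be written out rather than asserted (the paper itself glosses this as a ``without loss of generality'' reduction to positive $T$).
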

\begin{proof}
In the following, without lose generality, we assume that $T$ is positive operator.\\
$(1)\Rightarrow (2)$ Let $(x_n)\subset E$ and  $0<x_n\uparrow \leq x$ holds in $E$. Set $\sup x_n=y$. It follows $y-x_n\downarrow 0$. By hypothesis, we have
$\Vert T(y-x_n)\Vert \to 0$. It follows that $(Tx_n)$ is norm
 convergent.\\
$(1)\Rightarrow (3)$  By lemma \ref{2.7}, it is enough to prove that $Tx_\alpha$ is norm convergent to zero in $F$ whenever $x_\alpha\downarrow 0$ in $E$. Let $(x_\alpha)_\alpha\subset E^+$ with  $x_\alpha \downarrow 0$. If $(Tx_\alpha)_\alpha$ is not norm convergent, then it is not a norm Cauchy net. Thus there exists some $\epsilon>0$ and a sequence $(\alpha_n)$ of indices with
$ \alpha_n\uparrow$,
such that $\| Tx_{\alpha_n}-Tx_{\alpha_{n+1}}\|>\epsilon$ for all 
$n$. On the other hand, since $E$ is Dedekind $\sigma$-complete 
there is $x\in E$ such that $x_{\alpha_n}\uparrow x$. It follows 
$x-x_{\alpha_n}\downarrow 0$. By hypothesis, we see that 
$Tx_{\alpha_n}$ is norm Cauchy sequence, which contradicts with
$\|Tx_{\alpha_n}-Tx_{\alpha_{n+1}}\|>\epsilon$. Thus 
$(Tx_\alpha)_\alpha$ is  norm convergent to some point $y\in F$. 
By Theorem 5.6, \cite{2}, we see that $y=0$,  and so $T\in L_{on}(E,F)$.\\
$(3)\Rightarrow (1)$ Obviously.\\
$(2)\Rightarrow (1)$ Let $x_n\downarrow 0$. Then $0\leq (x_1-x_n)\uparrow x_1$ holds in $E$. Then we have $0\leq T(x_1-x_n)\uparrow Tx_1$. By assumption the sequence $\{T(x_1-x_n))$ is norm convergent, and so  Theorem 4.9 from \cite{2}, follows that $\Vert Tx_n\Vert\rightarrow 0$.  
\end{proof}

\begin{cor}\label{2.9}
Assume that $T:E\rightarrow F$ is an order  bounded operator from  $\sigma$-Dedekind complete vector lattice into  normed vector lattice. 
\begin{enumerate}
\item $T\in L^\sigma_{on}(E,F)$ if and only if $T$ is order weakly compact. 
\item If   $T\in L^\sigma_{on}(E,F)$, then $T$ is $M$-weakly compact.
\end{enumerate}
 \end{cor}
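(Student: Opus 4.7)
The plan is to deduce both parts from results already established in this section, specifically Corollary 2.8 and Theorem \ref{t:2.8}. Part (1) should essentially be immediate; Part (2) will require combining (1) with standard properties of order weakly compact operators and the $\sigma$-Dedekind completeness of $E$.

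For Part (1), the key observation is that every order bounded operator $T$ into a normed vector lattice $F$ is automatically interval-bounded: indeed, for $x\in E^+$, the image $T([-x,x])$ is order bounded in $F$, hence majorized by some $y\in F^+$, so $\|Tz\|\le\|y\|$ for all $z\in[-x,x]$. With this in place, Part (1) is a direct application of Corollary 2.8, since that result only uses the interval-bounded hypothesis on $T$.

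For Part (2), I would first invoke Part (1) to get that $T$ is order weakly compact. Given a norm bounded disjoint sequence $(x_n)\subset E$, I would split $x_n=x_n^+-x_n^-$ and note that both $(x_n^+)$ and $(x_n^-)$ are disjoint and norm bounded, so it suffices to handle positive disjoint sequences. Using $\sigma$-Dedekind completeness of $E$, the natural attempt is to form $y_n:=\sup_{k\ge n}x_k$; if this majorant exists and satisfies $y_n\downarrow 0$, then $x_n\xrightarrow{o}0$ and $\sigma$-order-to-norm continuity of $T$ gives $\|Tx_n\|\to 0$.

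The main obstacle will be justifying the existence and decrease-to-zero of the majorant $y_n$: for a merely norm bounded disjoint sequence in a $\sigma$-Dedekind complete lattice there need not be a common upper bound in $E$, so $\sup_{k\ge n}x_k$ is not guaranteed to exist. I expect one has to route the argument through the order weakly compact property of $T$ from Part (1) directly on the disjoint sequence rather than through order convergence, perhaps by truncating the $x_n$ against an order interval produced by order weak compactness before applying the $\sigma$-order-to-norm continuity of $T$; filling this gap is the delicate step.
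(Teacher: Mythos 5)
Your treatment of Part (1) is correct and is essentially the paper's own route: the paper deduces (1) from Meyer--Nieberg's Theorem 3.4.4 together with Theorem \ref{2.14} (equivalently, Theorem \ref{t:2.8} and its corollary), and your observation that an order bounded operator into a normed vector lattice is automatically interval-bounded (if $|z|\le y$ then $\|Tz\|$ is controlled because $T[-x,x]$ is majorized by some $y\in F^+$ and $\|w\|\le\|y\|$ for $|w|\le y$) is exactly the reduction needed to apply the interval-bounded corollary.

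For Part (2) the gap you flag is genuine and, more importantly, cannot be closed: the statement is false as written. Take $E=F=c_0$ and $T$ the identity. Then $E$ is Dedekind complete, $T$ is order bounded, and $T\in L^\sigma_{on}(E,F)$ because $c_0$ has order continuous norm; yet the standard unit vectors $(e_n)$ form a norm bounded disjoint sequence with $\|Te_n\|=1$ for all $n$, so $T$ is not $M$-weakly compact. The reason your majorant $y_n=\sup_{k\ge n}x_k$ need not exist is exactly the reason the implication fails: a norm bounded disjoint sequence need not be order bounded, so $\sigma$-order-to-norm continuity gives no control over it, and truncating against an order interval only recovers information about the truncated (hence order bounded) parts. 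The paper's own one-line proof cites Theorem 5.57 of Aliprantis--Burkinshaw, but that theorem characterizes order weak compactness by the condition $\|Tx_n\|\to 0$ for every \emph{order bounded} disjoint sequence; upgrading ``order bounded'' to ``norm bounded'' is precisely the (false) content of (2). The correct conclusion obtainable from Part (1) together with that theorem is only that $\|Tx_n\|\to 0$ for order bounded disjoint sequences, which is strictly weaker than $M$-weak compactness.
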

\begin{proof}
\begin{enumerate}
\item By using Theorem 3.4.4 from \cite{6} and Theorem \ref{2.14} proof follows.
\item By  Theorem 5.57, \cite{1}, proof follows.
\end{enumerate}
\end{proof}

\begin{cor} \label{2.y}
Let $T: E\rightarrow X$ be an order bounded bounded operator from $\sigma$-Dedekind complete vector lattice into Banach space. Then the following assertions are equivalent. 
\begin{enumerate}
\item $q_T(x_n)\rightarrow 0$ as $n\rightarrow 0$ for every order bounded disjoint sequence.
\item $\vert Tx_n\vert\rightarrow 0$ as $n\rightarrow 0$ for every order bounded disjoint sequence.
\item If $0<x_n\uparrow \leq x$ holds in $E$, then  $(Tx_n)$ is norm convergent.
\item $T$ is order-weakly compact.
\item $T\in L^\sigma_{on}(E,F)$.
\end{enumerate}
\end{cor}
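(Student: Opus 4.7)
The plan is to anchor everything around the triangle $(3)\Leftrightarrow(4)\Leftrightarrow(5)$, which is already in hand: $(3)\Leftrightarrow(5)$ is Theorem \ref{2.14}, while $(3)\Leftrightarrow(4)$ is Theorem 3.4.4 of \cite{6} (the same appeal used in Corollary \ref{2.9}, which states precisely $(4)\Leftrightarrow(5)$). Thus the genuinely new content is to glue statements (1) and (2) onto this cluster, and my strategy is to prove $(1)\Leftrightarrow(2)$ directly and $(2)\Leftrightarrow(4)$ via the standard disjoint-sequence characterization of order weakly compact operators.

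For $(1)\Leftrightarrow(2)$ I would first note that $(1)\Rightarrow(2)$ is immediate from the trivial bound $\Vert Tx\Vert\leq q_T(x)$ (taking $y=x$ in the definition of $q_T$; in the statement I read $|Tx_n|\to0$ as $\Vert Tx_n\Vert\to 0$, since $X$ is merely a Banach space). For $(2)\Rightarrow(1)$ I would argue by contradiction: if $(x_n)$ is order bounded and disjoint with $q_T(x_n)\not\to 0$, pass to a subsequence with $q_T(x_{n_k})>2\varepsilon$ and, using the supremum defining $q_T$, select $y_k$ with $|y_k|\leq|x_{n_k}|$ and $\Vert Ty_k\Vert>\varepsilon$. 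Because $|y_i|\wedge|y_j|\leq|x_{n_i}|\wedge|x_{n_j}|=0$ for $i\neq j$ and because any order bound for $(x_n)$ also bounds $(y_k)$, the sequence $(y_k)$ is order bounded and disjoint, contradicting (2).

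For $(2)\Leftrightarrow(4)$ I would invoke the standard result (Theorem 5.57 of \cite{1}) that an order bounded operator $T:E\to X$ from a Banach lattice into a Banach space is order weakly compact if and only if $\Vert Tx_n\Vert\to 0$ on every order bounded disjoint sequence in $E^+$; passing to $|x_n|$ extends this to disjoint sequences in $E$, giving exactly (2). Chaining the equivalences then completes the proof.

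The step I expect to cause the most friction is $(2)\Rightarrow(1)$: one must be careful that the near-optimal witnesses $y_k\in[-|x_{n_k}|,|x_{n_k}|]$ inherit both disjointness \emph{and} a uniform order bound from $(x_n)$, which is what allows the contradiction with (2). Everything else reduces to bookkeeping against Theorem \ref{2.14}, Corollary \ref{2.9}, and Theorem 5.57 of \cite{1}.
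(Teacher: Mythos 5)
Your proof is correct and follows essentially the same route as the paper: the paper's one-line proof simply cites Theorem 3.4.4 of Meyer-Nieberg \cite{6} --- which already packages the equivalence of (1)--(4), including the $q_T$/disjoint-sequence equivalence you prove by hand --- together with the earlier corollary identifying order weak compactness of an order bounded operator on a $\sigma$-Dedekind complete domain with membership in $L^\sigma_{on}(E,F)$. Your only deviations are cosmetic: you re-derive $(1)\Leftrightarrow(2)$ explicitly (correctly), and you route $(2)\Leftrightarrow(4)$ through Theorem 5.57 of \cite{1}, which is stated for Banach lattices $E$, whereas the paper's appeal to Meyer-Nieberg 3.4.4 (stated for Riesz spaces with the principal projection property) matches the purely order-theoretic hypothesis on $E$ more closely.
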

\begin{proof}
By using Theorem 3.4.4 from \cite{6} and Corollary \ref{2.8} proof holds.
\end{proof}

 Alpay-Altin-Tonyali introduced the class of $b$-weakly compact operators for vector lattices having separating order duals \cite{3}. In  \cite{4}, Alpay and Altin proved that a continuous operator $T$ from a Banach lattice $E$ into a Banach space $X$ is $b$-weakly compact if and only if $(Tx_n)_n$ is norm convergent for each $b$-order bounded increasing sequence $(x_n)_n$  in  $E^+$ if and only if  $(Tx_n)_n$ is norm convergent to zero for each $b$-order bounded disjoint sequence $(x_n)_n$ in $E^+$. In \cite{5} authors proved that an operator $T$ from a Banach lattice $E$ into a Banach space $X$ is $b$-weakly compact if and only if $(Tx_n)_n$  is norm convergent for every positive increasing sequence $(x_n)_n$ of the closed unit ball  $B_E$ of $E$.
Now in the following we study the relationships between two classifications of operators, $b-$weakly compact and $\sigma-$order-to-norm continuous operators.  

\begin{thm} \label{2}
Let $E$  and $F$ be normed vector lattice, and let $T$ be an operator from $E$ into $F$. Then the following assertions hold 
\begin{enumerate}
\item  If $F$ is Dedekind $\sigma$-complete, then each positive $b$-weakly compact operator is $\sigma$-order-to-norm continuous operator. 
\item If $E$ has a order unit with Dedekind $\sigma$-complete, then each $\sigma$-order-to-norm continuous operator is $b$-weakly compact operator
\end{enumerate}
\end{thm}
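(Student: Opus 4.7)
My plan is to handle the two directions separately via the Alpay--Altin characterization cited immediately before the statement: $T$ is $b$-weakly compact iff $(Tx_n)$ is norm convergent for every positive increasing $b$-order bounded sequence $(x_n)\subset E^+$, equivalently iff $(Tw_n)$ is norm null for every positive $b$-order bounded disjoint sequence $(w_n)$.

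For part (1), let $T\geq 0$ be $b$-weakly compact with $F$ Dedekind $\sigma$-complete. Given $x_n\xrightarrow{o}0$, the paper's (one-net) definition provides $y_n\downarrow 0$ with $|x_n|\le y_n$; positivity gives $\|Tx_n\|\le\|Ty_n\|$, reducing matters to showing $\|Ty_n\|\to 0$. Set $u_n=y_1-y_n$: then $(u_n)$ is positive, increasing, and bounded above by $y_1$ (hence $b$-order bounded), so Alpay--Altin forces $(Tu_n)$, and consequently $(Ty_n)=(Ty_1-Tu_n)$, to be norm convergent to some $v\in F$. On the other hand $(Ty_n)$ is decreasing in $F^+$; Dedekind $\sigma$-completeness of $F$ yields $\inf_n Ty_n$, which must agree with the norm limit (Theorem 5.6 of \cite{2}), so $Ty_n\downarrow v$. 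The remaining step --- concluding $v=0$ --- is the genuinely delicate one; the plan is to leverage the disjoint-sequence formulation of $b$-weak compactness together with the $\sigma$-completeness of $F$ to extract from $(y_n)$ an auxiliary $b$-order bounded positive disjoint test sequence whose $T$-images would witness $v\neq 0$, thereby contradicting the disjoint-sequence criterion. This is the main obstacle.

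For part (2), assume $E$ has an order unit $e$, is Dedekind $\sigma$-complete, and let $T$ be $\sigma$-order-to-norm continuous. By Alpay--Altin it suffices to show $(Tx_n)$ is norm convergent for every positive increasing $b$-order bounded $(x_n)\subset E^+$. Any such $(x_n)$ is norm bounded, and since $e$ is an order unit one obtains $x_n\le Ce$ for some $C>0$, so $(x_n)$ is order bounded in $E$; Dedekind $\sigma$-completeness then produces $x\in E$ with $x_n\uparrow x$, whence $x-x_n\downarrow 0$. The $\sigma$-on hypothesis gives $\|T(x-x_n)\|\to 0$, so $Tx_n\to Tx$ in norm, as required.
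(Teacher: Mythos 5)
Your part (2) is correct and is essentially the paper's own argument: norm boundedness plus the order unit gives order boundedness, $\sigma$-Dedekind completeness produces $x$ with $x_n\uparrow x$, and then $x-x_n\downarrow 0$ together with the $\sigma$-order-to-norm hypothesis finishes (you invoke the Alpay--Altin form of $b$-weak compactness where the paper uses the Aqzzouz--Moussa--Hmichane unit-ball form; the reduction is identical). The problem is part (1), and you have correctly located where it lives: everything up to the existence of the norm limit $v$ of $(Ty_n)$ is fine, but the step $v=0$ is not merely delicate --- it cannot be carried out from the stated hypotheses. Take $E=C([0,1])$, $F=\mathbb{R}$ and $T(f)=f(0)$. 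Since $b$-order bounded subsets of $E$ are norm bounded and $T$ is a positive (hence continuous) functional, $T$ is $b$-weakly compact; yet $f_n(t)=\max(0,1-nt)$ satisfies $f_n\downarrow 0$ in $C([0,1])$ while $Tf_n=1$ for all $n$, so $T$ is not $\sigma$-order-to-norm continuous. No auxiliary disjoint sequence extracted from $(y_n)$ can rescue the argument: the disjoint-sequence criterion only controls $T$ on $b$-order bounded disjoint sequences, and the obstruction is the failure of $\sigma$-Dedekind completeness of $E$, which permits $\sup_n y_n=x_1$ in $E$ without $(y_n)$ exhausting $x_1$ in any way $T$ can detect (here $Ty_n\equiv 0$ while $Tx_1=1$).

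For comparison, the paper's own proof of (1) conceals exactly this gap: it sets $y_n=x_1-x_n\uparrow$, obtains norm convergence of $(Ty_n)$ from the $b$-weak compactness characterization, and then cites Theorem 3.46 of Aliprantis--Burkinshaw to conclude that the limit is $Tx_1$. That theorem only yields that the norm limit equals $\sup_n Ty_n$; identifying $\sup_n Ty_n$ with $T(\sup_n y_n)=Tx_1$ is precisely the $\sigma$-order continuity being proved, and it fails in the example above. The assertion becomes true if one additionally assumes $E$ is $\sigma$-Dedekind complete: a $b$-weakly compact operator is order weakly compact (order bounded sets are $b$-order bounded), and then Theorem \ref{2.14} and Corollary \ref{2.9} (equivalently Meyer-Nieberg, Theorem 3.4.4) give $\sigma$-order-to-norm continuity. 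If you want a correct proof of (1), add that hypothesis and route the argument through order weak compactness rather than trying to force $v=0$ directly.
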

\begin{proof}
\begin{enumerate}
\item Let $T$ be a $b$-weakly compact operator and $(x_n)_n\subset E$ with $x_n \downarrow 0$. Set $y_n=x_1-x_n$. Then $y_n\uparrow$ and $\sup_n\|y_n\|\leq \Vert x_1\Vert$. It follows that $(T(y_n))_n$ is norm convergent and $Ty_n\uparrow$. By using Theorem 3.46 from \cite{1}, $(T(y_n))_n$ norm convergence to $Tx_1$. Thus $(T(x_n))_n$ is norm convergent to $0$.
\item Let $(x_n)_n$ be an increasing positive sequence in $E$ with $\sup_n\Vert x_n\Vert<\infty$. Since   $E$ has  order unit, $(x_n)_n$ is order bounded, and so there is a positive element $x\in E$ such that $x_n \uparrow \leq x$. Then there exists $y\in E$ such that $\sup_n x_n=y$. It follows that $0\leq y-x_n\downarrow 0$. As $T\in L_{on}^\sigma (E,F)$, we have $\Vert T(y-x_n)\Vert\rightarrow 0$ which implies $(T(x_n))_n$  is norm convergence, and so $T$ is $b$-weakly compact.
\end{enumerate}
\end{proof}
 It is clear that the identity operator $ I:l^1 \rightarrow l^1 $ is an order-to-norm continuous operator, but its adjoint $ I:l^\infty \rightarrow l^\infty $ is not order-to-norm continuous. Note that the identity operator $ I : l^\infty \rightarrow l^\infty $ is not order-to-norm continuous, while its adjoint is order-to-norm continuous. The following results, give a sufficient and necessary condition for which the order-to-norm continuity of an operator implies the order-to-norm continuity of its adjoint and reverse.
\begin{thm}
Let $E$ and $ F$ be two Banach lattices with $E$ Dedekind complete. Then the following conditions are equivalent.
\begin{enumerate}
	\item Each continuous operator from $ F^\prime $ into $ E^\prime$ is order-to-norm continuous.
	\item If a continuous operator $ T:E\rightarrow F $ is an order-to-norm continuous operator, then its adjoint $ T^\prime $ is order-to-norm continuous.
	\item $ F^\prime $ is a $KB$-space.
	\end{enumerate}	
\begin{proof}
	$1 \Rightarrow 2$ Obvious.
	
\noindent $2 \Rightarrow 3$ Assume by the way of contradiction, that $ F^\prime $ is not a $KB$-space.  By Lemma 2.1 of \cite{dualbweak} there is a positive order bounded disjoint sequence $(g_n)$ of $F^\prime$ satisfying $\|g_n\| =1 $. Since $\|g_n\|=\sup\{g_n(y): 0\leq y \in F \text{ and } \|y\|\leq 1 \}$ holds for all $n$,  we choose $y_n \in F^+$ with $\|y_n \|=1$ and $g_n(y_n) \geq \frac{1}{2}$.  Now, we consider a positive operator $ T:l^1 \rightarrow F $ defined by 
	\begin{equation*}
		T((a_n))= \sum_{n=1}^{\infty} a_ny_n \text{ for all } (a_n)\in l^1.
		\end{equation*}
	Clearly, $ T $ is well defined. Since $ T $ is positive, therefore $T$ is continuous and hence $ T $ is  order-to-norm continuous. But its adjoint $ T^\prime : F^\prime \rightarrow l^\infty $ defined by 
	\begin{equation*}
		T^\prime(h)= (h(y_n))  \text{ for all } h\in F^\prime .
		\end{equation*}
As follows, we shows that $T^\prime$	is not order-to-norm continuous. Since $(g_n)$ is disjoint, by using Corollary 3.6 from \cite{5g} obvious $g_n \xrightarrow{uo} 0$ in $F^\prime$. Now since $ (g_n)$ is order bounded, it is clear that $g_n \xrightarrow{o} 0 $ in $ F^\prime$. On the other hands, we have
	\begin{equation*}
		\|T^\prime (g_n)\|= \|(g_n(y_m))_m\| \geq\| g_n(y_n)\|\geq \frac{1}{2} \ holds \ for \ all \ n.
	\end{equation*}
Therefore $T^\prime$ is not order-to-norm continuous.

\noindent $3\Rightarrow 1$ Obvious.	
	\end{proof}
\end{thm}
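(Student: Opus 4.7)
The plan is to establish the cycle $(1) \Rightarrow (2) \Rightarrow (3) \Rightarrow (1)$, with the first and third arrows being essentially immediate, and the real work concentrated in $(2) \Rightarrow (3)$, which I would prove by contrapositive via an explicit construction.

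For $(1) \Rightarrow (2)$: this is automatic, since for any continuous $T \colon E \to F$, the adjoint $T' \colon F' \to E'$ is a continuous operator of the type covered by (1), hence order-to-norm continuous.

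For $(3) \Rightarrow (1)$: recall that every $KB$-space has order continuous norm. So if $F'$ is a $KB$-space and $(h_\alpha) \subset F'$ satisfies $h_\alpha \xrightarrow{o} 0$, then $\|h_\alpha\| \to 0$. For any continuous $S \colon F' \to E'$ we then get $\|S h_\alpha\| \leq \|S\|\,\|h_\alpha\| \to 0$, so $S \in L_{on}(F', E')$.

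The substantive step is $(2) \Rightarrow (3)$, which I would prove contrapositively: assume $F'$ is not a $KB$-space and construct a continuous order-to-norm continuous operator $T$ whose adjoint fails this property. First I would invoke Lemma~2.1 of \cite{dualbweak} to extract a positive order bounded disjoint sequence $(g_n) \subset F'$ with $\|g_n\| = 1$, and then, using the standard representation of the dual norm, pick $y_n \in F^+$ with $\|y_n\| = 1$ and $g_n(y_n) \geq \tfrac{1}{2}$. Then I would define $T \colon \ell^1 \to F$ by $T((a_n)) = \sum_n a_n y_n$, which converges absolutely since $\sum |a_n|\,\|y_n\| = \|(a_n)\|_1$; the operator is positive and therefore continuous. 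The order-to-norm continuity of $T$ is free because $\ell^1$ has order continuous norm, and (as noted right after the definition of $L_{o\tau}$ in the paper) over such spaces every continuous operator is automatically order-to-topology continuous.

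Finally I would analyze the adjoint $T' \colon F' \to \ell^\infty$, which acts by $T'(h) = (h(y_n))_n$. The main obstacle here is to verify that $g_n \xrightarrow{o} 0$ in $F'$: since $(g_n)$ is disjoint, Corollary~3.6 of \cite{5g} yields $g_n \xrightarrow{uo} 0$, and since $(g_n)$ is order bounded, $uo$- and $o$-convergence coincide, so indeed $g_n \xrightarrow{o} 0$. On the other hand,
\[
\|T'(g_n)\|_\infty \;\geq\; |g_n(y_n)| \;\geq\; \tfrac{1}{2},
\]
so $T'$ is not order-to-norm continuous, contradicting (2). The delicate point in this direction is simultaneously arranging $T$ positive (to force continuity and order-to-norm continuity for free) while keeping the test sequence $(g_n)$ large enough on the chosen vectors $y_n$ to defeat the adjoint; the $KB$-space failure furnishes precisely the disjoint sequence needed to make both demands compatible.
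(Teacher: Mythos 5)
Your proposal is correct and follows essentially the same route as the paper: the same contrapositive construction for $(2)\Rightarrow(3)$ using Lemma~2.1 of \cite{dualbweak}, the same operator $T\colon\ell^1\to F$, and the same $uo$-convergence argument via Corollary~3.6 of \cite{5g}. Your justification that $T$ is order-to-norm continuous (via the order continuity of the norm of $\ell^1$) is in fact slightly more careful than the paper's, which infers it directly from positivity and continuity.
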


\begin{thm}
	Let $ E $ be a vector lattice with Dedekind complete and property (b), and $F$ be a Banach lattice. Then the following statements are equivalent.
	\begin{enumerate}
		\item Each continuous operator from $ E $ into $ F $ is order-to-norm continuous.
		\item Each continuous operator $ T : E\rightarrow F $ is order-to-norm continuous whenever its adjoint $ T^\prime $ is order-to-norm continuous.
		\item $ E $ is a $KB$-space
	\end{enumerate}
\begin{proof}
	$ 1 \Rightarrow 2 $ Obvious.
	
\noindent $ 2 \Rightarrow 3 $ Let $ E $ is not $KB$-space. To finish the proof, we have to construct an operator $ T:E \rightarrow F $ such that $ T $ is not order-to-norm continuous but its adjoint $ T^\prime $ is order-to-norm continuous. 
	
	Since $ E $ is not $KB$-space then, it follows from Lemma 2.1 of \cite{dualbweak} that $ E^+$ contains a b-order bounded disjoint sequence $ (x_n) $ satisfying $\| x_n \| =1$ for all $n$. So by Lemma 3.4 of \cite{dualbweak}, there exists a positive disjoint sequence $ (g_n)$ of $ E^\prime $ with $\|g_n\| \leq 1 $ such that 
	\begin{equation*}
	g_n (x_n)=1 \ 	for\  all\  n\  and\  g_n(x_m)=0\  for\  all\  n\neq m.
	\end{equation*}
Now we defined the positive operator $ T : E \rightarrow l^\infty $ by 
\begin{equation*}
	T(x) = (g_n(x))_{n=1} ^ \infty\   for\  each\  x\in E.
	\end{equation*}
Note that $ (g_n(x))_{n=1}^\infty \subset R $ and $ \sup_n | g_n (x)| \leq \sup_n \| g_n \| \|x\| \leq \|x\| < \infty$. So, $ T $ is well defined. It is clear that $(x_n) $ is order bounded. By using Corollary 3.6 from \cite{5g}, obvious $ x_n \xrightarrow{uo} 0$, and thus $ x_n \xrightarrow{o} 0 $ in $ E$. Since  $ T(x_n) = e_n $, therefore $T(x_n) \nrightarrow 0 $ in norm. It follows that $ T $ is not order-to-norm continuous. Since the norm of $(l^\infty)^\prime$ is order continuous, it follows that $ T^\prime : (l^\infty)^\prime \rightarrow E^\prime $ is order-to-norm continuous.

\noindent $3 \Rightarrow 1$ Obvious.	
	\end{proof}
\end{thm}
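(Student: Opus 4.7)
The plan is to establish the equivalence cyclically via $1 \Rightarrow 2 \Rightarrow 3 \Rightarrow 1$. The implication $1 \Rightarrow 2$ is immediate from the definitions, and $3 \Rightarrow 1$ is a standard consequence of the fact that every $KB$-space has order continuous norm: if $x_\alpha \xrightarrow{o} 0$ in $E$, one picks $y_\alpha \downarrow 0$ with $|x_\alpha|\leq y_\alpha$, order continuity of the norm gives $\|y_\alpha\|\to 0$ and hence $\|x_\alpha\|\to 0$, so any continuous $T\colon E\to F$ satisfies $\|Tx_\alpha\|\leq \|T\|\,\|x_\alpha\|\to 0$.

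The substantive work is $2\Rightarrow 3$, which I would prove by contraposition, imitating the construction used in the preceding theorem. Assuming $E$ is not a $KB$-space, Lemma 2.1 of \cite{dualbweak} supplies a $b$-order bounded disjoint sequence $(x_n)\subset E^+$ with $\|x_n\|=1$, and Lemma 3.4 of \cite{dualbweak} yields a positive disjoint sequence $(g_n)\subset E'$ with $\|g_n\|\leq 1$, $g_n(x_n)=1$, and $g_n(x_m)=0$ for $n\neq m$. I would then define $T\colon E\to \ell^{\infty}$ by $T(x)=(g_n(x))_n$; the bound $\sup_n|g_n(x)|\leq \|x\|$ shows $T$ is well defined and, being positive on a Banach lattice, continuous. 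Disjointness of $(x_n)$ combined with Corollary 3.6 of \cite{5g} makes $x_n\xrightarrow{uo}0$, and property (b) promotes the $b$-order boundedness of $(x_n)$ to genuine order boundedness in $E$, so that the $uo$-convergence upgrades to $x_n\xrightarrow{o}0$. However $Tx_n=e_n$ has norm $1$, so $T$ is not order-to-norm continuous. On the other hand $(\ell^\infty)'$ is an $AL$-space by Kakutani's theorem and so has order continuous norm; the same reasoning as in $3\Rightarrow 1$ then makes $T'\colon (\ell^\infty)'\to E'$ order-to-norm continuous automatically. This $T$ contradicts condition~$2$ and closes the cycle.

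The main obstacle is coordinating the ingredients correctly: property (b) must be invoked to convert $b$-order boundedness of $(x_n)$ into order boundedness, so that the $uo$-null sequence produced by the disjointness argument is also $o$-null in $E$; and the codomain must be chosen so that the adjoint is order-to-norm continuous without effort, which is precisely why $\ell^\infty$ (with its $AL$-space dual) is the right target. Once these two structural choices are made, the rest of the argument is a direct transcription of the construction in the preceding theorem.
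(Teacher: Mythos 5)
Your proposal follows essentially the same route as the paper's own proof: the same cyclic scheme, the same two lemmas from \cite{dualbweak}, the same operator $T(x)=(g_n(x))_{n}$ into $\ell^\infty$, and the same use of property (b) to upgrade $b$-order boundedness of $(x_n)$ to order boundedness so that the disjoint $uo$-null sequence is $o$-null while $Tx_n=e_n$ stays away from $0$. You merely spell out the ``obvious'' implications and make explicit why $(\ell^\infty)'$ has order continuous norm; note that your argument shares with the paper's the defect that the counterexample lands in $\ell^\infty$ rather than in the given codomain $F$.
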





\end{document}